\documentclass{article}
\usepackage{latexsym}
\usepackage{epsfig}
\usepackage{amsmath}
\usepackage{amssymb}
\usepackage{amsthm}
\righthyphenmin=2

\newtheorem{theorem}{Theorem}[section]

\newtheorem{lemma}[theorem]{Lemma}

\theoremstyle{remark}
\newtheorem{remark}[theorem]{Remark}

\theoremstyle{definition}

\newcommand{\angles}[2]{\langle #1,#2 \rangle}

\DeclareMathOperator\VecM{vec}

\usepackage{algorithm}
\usepackage{algpseudocode}

\begin{document}

\title{Robustifying networks for flow problems against edge failure}

\author{
  Artyom Klyuchikov \\
  \and
  Roland Hildebrand \\
  MIPT\footnote{Moscow Institute of Physics and Technology, Moscow, Russia}\\
  \texttt{khildebrand.r@mipt.ru} \\
  \and
  Sergei Protasov \\
  MIPT
  \and
  Alexander Rogozin \\
  MIPT\\
  \texttt{aleksandr.rogozin@phystech.edu}
  \and
  Alexei Chernov\\
  MIPT\\
  \texttt{chernov.av@mipt.ru}
}

\maketitle

\begin{abstract}
We consider the robust version of a multi-commodity network flow problem. The robustness is defined with respect to the deletion, or failure, of edges. While the flow problem itself is a polynomially-sized linear program, its robust version is a saddle-point problem with discrete variables. We present two approaches for the solution of the robust network flow problem. One way is to formulate the problem as a bigger linear program. The other is to solve a multi-level optimization problem, where the linear programs appearing at the lower level can be solved by the dual simplex method with a warm start. We then consider the problem of robustifying the network. This is accomplished by optimally using a fixed budget for strengthening certain edges, i.e., increasing their capacity. This problem is solved by a sequence of linear programs at the upper level, while at the lower levels the mentioned dual simplex algorithm is employed.
\end{abstract}

\section{Introduction}

Networks and flow problems on them appear in many practically relevant situations. These can be server networks used for data traffic or distributed computations, telecommunication and transportation networks, or power grid networks for the distribution of electrical energy. The pure multi-commodity flow problem (MCFP) arising in such contexts are expressible as linear programs (LP) and hence comparably easy to solve. More information about the MCFP, its applications and solution methods can be found in, e.g., \cite{Salimifard20},\cite{Averbuch05},\cite{Ouorou00} and the references therein.

However, the solution of the linear program is often not satisfactory since additional constraints can be present which complicate the situation and, in particular, render the problem non-convex. In this work we address how to cope with a particular kind of such constraints, related to robustness of the solution. Robust counterparts of convex programs, in particular, linear programs, have been formulated in \cite{NemirovskiRobust}. The robustness is understood against uncertainties in the continuous data of the problem. The solution of the robust counterpart is essentially a saddle point, providing the best value to the objective function under the worst realization of the uncertainty. The robust counterpart of an LP with ellipsoidal uncertainty is a second-order cone program and can also easily be solved with methods of conic programming.

In the case of flow problems on networks we are confronted with a different robustness issue, which is more discrete in nature and hence more difficult to handle. In many practical situations the network edges are prone to deletion by failure. The source of uncertainty considered here is hence the failure of edges of the network. Following the paradigm in \cite{NemirovskiRobust}, we shall measure the robustness of the network by the maximal performance degradation if an arbitrary subset of $k$ edges is deleted.

It is, however, not possible to design a flow which remains feasible under arbitrary edge deletions, since any edge to which a positive flow is assigned can potentially be deleted. The robust counterpart of the flow problem in the sense of \cite{NemirovskiRobust} hence does not exist. 

Instead, we shall seek to \textit{robustify} the network by allocating a constrained budget of additional capacity to the critical edges. This is accomplished by minimizing the mentioned maximal performance degradation with respect to the distribution of the budget. This ultimately leads to a three-level min-max-min optimization problem. Here the two minimization problems are with respect to continuous variables, namely the flows for the inner and the additional capacities for the outer optimization level, and the maximum is with respect to discrete variables modeling the locations of edge failures.

While the inner minimization problem is the well-known LP solving the MCFP, the middle maximization presents a challenge, as it is formally a mixed-binary linear program. We consider two ways to cope with this difficulty. 

The first way is feasible if the number of failed edges is very small, such that all configurations with failed edges can be explicitly incorporated into the problem formulation. This leads to a large linear program. The second approach consists of employing the dual simplex method for solving the flow problems with $k$ failed edges, warm started with the optimal solutions of those problems with $k-1$ failed edges. This way the dual simplex method is used in the branch-and-bound algorithms for solving mixed-integer linear programs \cite{Wolsey98}. 

The remainder of the paper is structured as follows. In Section \ref{sec:model} we describe the model of the network and formulate the throughput and latency criteria. In Section \ref{sec:warm_start} we briefly explain how to use the dual simplex method to efficiently solve daughter LPs. In Section \ref{sec:robust} we formulate the problem of optimal robustification of the network with respect to the robustness measure. In Section \ref{sec:conclusions} we provide some conclusions.

\section{Nominal problem formulation} \label{sec:model}

\subsection{Network model} 

The network is modeled by a directed graph $G = (V,E)$ with $n$ vertices and $m$ edges $e = (i,j) \in E$ being equipped by capacities $b_e > 0$ which limit the throughput. The goal is to satisfy the demand for transportation of an entity through the network. This demand is modeled by a demand matrix $D$, with element $d_{ij}$ specifying how much of the entity has to be taken from vertex $i$ to vertex $j$.

The flow problem will be formulated edge-wise. We introduce variables $f_e^s$ that denote the amount of flow originating from source $s$ and traversing the oriented edge $e$. The flows are stored in a matrix $F = [f_e^s]_{e,s=1,1}^{m,n}\in\mathbb R^{m\times n}$.

There are three kinds of constraints imposed on the flows.

\medskip

\noindent\textit{Capacity constraints}: the sum of the flows over one edge should not exceed its capacity. Hence for each $e = 1, \ldots, m$ we have
\begin{equation} \label{capacity_constraint}
    \sum_{s=1}^n f_e^s\leq b_e,
\end{equation}
which can be written in matrix form as $F{\bf 1}\leq b$.

\medskip

\noindent\textit{Demand constraints}: Each source generates its own flow which must satisfy balance equations at every vertex. In addition, the demand must be satisfied at the target vertices. Let us fix the source $s$ and consider some node $i$. Consider the flow originating from $s$ to target vertex $i$. We have the balance equation
\begin{align*}
	\Delta_i^s = \sum_{e\in E_{in}(i)} f_e^s - \sum_{e\in E_{out}(i)} f_e^s.
\end{align*}
If $i\neq s$, this amount equals the flow running from $s$ to $i$, i.e. $\Delta_i^s = d_{si}$. If $i = s$, this amount is the sum of flows originating from $s$ with inverse sign, i.e. $\Delta_s^s = -\sum_{k\neq s} d_{sk}$. In matrix form the demand constraints can hence be written as $NF = -L_D$. Here $L_D$ is the Laplacian of the demand matrix. Note that the Laplacian is in general not symmetric.

\medskip

\noindent\textit{Non-negativity constraints}: The flows must be non-negative, which amounts to the element-wise inequality $F\geq 0$.

\subsection{Throughput}

In this section we define the throughput of the network as the optimal value of a maximum flow problem. The net flow from the source to the target nodes has to remain proportional to the demand matrix, but the scale factor is maximized.

More concretely, introduce the \textit{throughput} as the solution of the \textit{maximal concurrent flow problem}
\begin{align*}
	\max_{\lambda, F\geq 0}~ &\lambda\tag{T}\label{eq:throughput} \\
	\text{s.t. } &F{\bf 1}\leq b \\
	&NF = -\lambda L_D
\end{align*}
Here $\lambda$ denotes the proportion of satisfied demands. This problem is a linear program (LP).

\subsection{Relation to load balance}

Let us discuss how throughput is related to load balance (in fact, it is equivalent). Initially, load balance is defined as the maximally loaded edge. Optimal load balance is the solution of the following problem.
\begin{align*}
	\min_{F\geq 0}~ &\max_{e=1,\ldots,m} \frac{[F{\bf 1}]_e}{[b]_e} \\
	\text{s.t. } &NF = -L_D
\end{align*}
It is equivalent to
\begin{align*}
	\min_{\theta, F\geq 0}~ &\theta \tag{LB}\label{eq:load_balance} \\
	\text{s.t. } &F{\bf 1}\leq \theta b \\
	&NF = -L_D
\end{align*}
\begin{lemma}
	Let all bandwidths be positive, i.e. $b > 0$ and let at least one of the demands be nonzero. Then problems \eqref{eq:throughput} and \eqref{eq:load_balance} are equivalent in the sense that given a solution of \eqref{eq:throughput} we can immediately obtain a solution of \eqref{eq:load_balance} and vice versa.
\end{lemma}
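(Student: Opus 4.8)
The plan is to exhibit an explicit, objective-reversing bijection between the feasible sets of \eqref{eq:throughput} and \eqref{eq:load_balance}, exploiting the fact that both constraint systems are positively homogeneous once $\lambda$ (respectively $\theta$) is treated jointly with the flow matrix $F$. Concretely, I would send a feasible point $(\lambda,F)$ of \eqref{eq:throughput} with $\lambda>0$ to the pair $(1/\lambda,\,F/\lambda)$, and a feasible point $(\theta,F)$ of \eqref{eq:load_balance} with $\theta>0$ to the pair $(1/\theta,\,F/\theta)$; these two assignments are manifestly inverse to one another.

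First I would check that each map lands in the claimed feasible set. If $NF=-\lambda L_D$ and $F{\bf 1}\le b$ with $\lambda>0$, then dividing by $\lambda$ gives $N(F/\lambda)=-L_D$, $(F/\lambda){\bf 1}\le(1/\lambda)b$ and $F/\lambda\ge 0$, so $(1/\lambda,F/\lambda)$ is feasible for \eqref{eq:load_balance}; the converse implication is identical. Since the objective value $\lambda$ of \eqref{eq:throughput} is the reciprocal of the objective value of \eqref{eq:load_balance} at the image point, and $t\mapsto 1/t$ is strictly decreasing on $(0,\infty)$, optima are carried to optima and $\theta^\ast=1/\lambda^\ast$.

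The delicate step, and the one I expect to need the hypotheses, is to exclude the degenerate values $\lambda=0$ and $\theta=0$ so that the divisions above make sense. Since some demand is nonzero, $L_D\ne 0$. Hence every feasible $(\theta,F)$ of \eqref{eq:load_balance} has $NF=-L_D\ne0$, so $F\ne0$; as $F\ge0$, the vector $F{\bf 1}$ has a strictly positive entry, and $F{\bf 1}\le\theta b$ together with $b>0$ forces $\theta>0$, so the forward map is defined on the entire feasible set of \eqref{eq:load_balance}. Symmetrically, $F\ge0$ and $F{\bf 1}\le b$ keep the feasible set of \eqref{eq:throughput} bounded, whence $\lambda L_D=-NF$ stays bounded and $\lambda^\ast<\infty$; and $\lambda^\ast>0$ exactly when \eqref{eq:load_balance} is feasible, by pushing any of its feasible points forward through the bijection. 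When \eqref{eq:load_balance} is infeasible there is nothing to match and the statement is vacuous; otherwise the two assignments above convert a solution of either problem into a solution of the other, which is the claim.
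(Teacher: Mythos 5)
Your proof is correct and rests on the same idea as the paper's: the scaling bijection $(\lambda,F)\mapsto(1/\lambda,F/\lambda)$ between the two feasible sets, which reverses the objectives and hence carries optima to optima with $\theta^*\lambda^*=1$. You are in fact somewhat more careful than the paper in justifying why $\theta>0$ on every feasible point of \eqref{eq:load_balance} and why $\lambda^*$ is finite and positive (the paper simply asserts these and then adds a redundant KKT verification), so no changes are needed.
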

\begin{proof}
	Let $(F_\lambda^*, \lambda^*)$ be the solution of \eqref{eq:throughput} and $(F_\theta^*, \theta^*)$ be the solution of \eqref{eq:load_balance}. Since all bandwidths are positive and at least one demand is nonzero, we have $\lambda^* > 0,~ \theta^* > 0$. Let us show that $\lambda^*\theta^* = 1$.
	
	On the one hand, since $(F_\lambda^*, \lambda^*)$ satisfies the constraints in \eqref{eq:throughput}, $(F_\lambda^* / \lambda^*, 1/\lambda^*)$ satisfies the constraints in \eqref{eq:load_balance}. Therefore, $\theta^* \leq 1/\lambda^*$. On the other hand, $(F_\theta^*, \theta^*)$ is feasible for problem \eqref{eq:load_balance}, and therefore $(F_\theta^*/\theta^*, 1/\theta^*)$ satisfies the constraints in \eqref{eq:throughput}. We have that $\lambda^* \geq 1/\theta^*$. Summing up, we conclude that $\theta^*\lambda^* = 1$.
	
	Now let us write KKT conditions for both problems. The dual variables are denoted by $x_\lambda, y_\lambda$ for \eqref{eq:throughput} and by $x_\theta, y_\theta$ for \eqref{eq:load_balance}.
	\begin{align*}
		\text{Problem } &\eqref{eq:throughput} &\text{Problem } \eqref{eq:load_balance} \\
		-x_\lambda^*{\bf 1}^\top+ N^\top y_\lambda^*&\leq 0  &x_\theta^*{\bf 1}^\top+ N^\top y_\theta^*\leq 0 \\
		-\angles{y_\lambda^*, D} + 1 &= 0  &-\angles{x_\theta^*, u} + 1 = 0 \\
		-F_\lambda^*{\bf 1}+ b&\geq 0  &F_\theta^*{\bf 1} - \theta^* b\leq 0 \\
		NF_\lambda^* + \lambda^* L_D &= 0  &NF_\theta^* + L_D = 0
	\end{align*}
	Substituting $F_\lambda^* = \lambda^*F_\theta^*$ to KKT conditions for \eqref{eq:throughput}, we get
	\begin{align*}
		-F_\lambda^*{\bf 1} + b &= -\lambda^* F_\theta^*{\bf 1} + b = -\lambda^*\left(F_\theta^*{\bf 1} - \frac{1}{\lambda^*} b\right) = -\lambda^*(F_\theta^*{\bf 1} - \theta^* b)\leq 0, \\
		NF_\lambda^* + \lambda^* L_D &= \lambda^* NF_\theta^* + \lambda^* L_D = 0,
	\end{align*}
	i.e. $F_\lambda^* = \lambda^*F_\theta^*$ is a solution of \eqref{eq:throughput}. Analogously, it can be shown that $F_\theta^* = F_\lambda^* / \lambda^*$ is a solution of \eqref{eq:load_balance}.
\end{proof}

\subsection{Average latency}

Latency is the measure of the delay of a bit of information. We describe several definitions: linear latency, nonlinear latency and max-average latency.

By latency we understand the sum of delays over all bits of information. Given the solution of \eqref{eq:throughput} $\lambda_{\max}$ and a scalar $\beta\in(0, 1]$, we introduce the \textit{average latency} as the solution of the problem
\begin{align*}
	\min_{F\geq 0}~ &\frac{\angles{c(F{\bf 1}), {\bf 1}}}{\beta\lambda_{\max}{\bf 1}^\top D{\bf 1}} \tag{L}\label{eq:avg_latency}\\
	\text{s.t. } &F{\bf 1}\leq b \\
	&NF = -\beta \lambda_{\max} L_D
\end{align*}
Here $c$ is some element-wise applied function, examples will be given further below. Note that the latency is defined w.r.t. maximal throughput. The more the network is loaded, the larger is the delay which occurs when satisfying all the demand. Here the scalar $\beta$ measures the ratio of network load. 

In this work we mostly concentrate on the value $\beta = 0.9$. The total delay of flows, given by $\langle c(F{\bf 1}), {\bf 1}\rangle$, is divided by the total flow. Provided that the demands are met, the sum of flows equals $\beta\lambda_{\max}{\bf 1}^\top D{\bf 1}$. Since $\beta\lambda_{\max}{\bf 1}^\top D{\bf 1}$ is a constant, problem~\eqref{eq:avg_latency} is an LP.

The vector $c(F{\bf 1})$ contains the edge delays, it depends on the vector of total flows over the individual edges $F{\bf 1}$. In general, the delay at an edge depends only on the flow over this edge, but not on the flow over other edges. We consider several scenarios:
\begin{align*}
	[c(f)]_e &= c_e f_e, \tag{La}\label{eq:linear_latency} \\
	[c(f)]_e &= \frac{c_e f_e}{1 - f_e/b_e}, \tag{Lb}\label{eq:nonlinear_latency_inv} \\
	[c(f)]_e &= c_e(1 - \log(1 - f_e/b_e)). \tag{Lc}\label{eq:nonlinear_latency_log}
\end{align*}
We will further refer to \eqref{eq:linear_latency}, \eqref{eq:nonlinear_latency_inv}, \eqref{eq:nonlinear_latency_log} meaning optimization problem \eqref{eq:avg_latency} with the corresponding objective. Note that \eqref{eq:linear_latency} results in a linear programming problem, while \eqref{eq:nonlinear_latency_inv} and \eqref{eq:nonlinear_latency_log} lead to nonlinear programs.

\begin{remark}
	Despite the simplicity of the introduced concepts some of them pose substantial challenges for their computational implementation. In particular, function \eqref{eq:nonlinear_latency_inv} is computationally unstable. Its value grows quickly as the flow over edge $f_e$ approaches the limiting bandwidth $b_e$. In order to provide stability for the algorithms in practice we multiply \eqref{eq:nonlinear_latency_inv} by a very small positive constant $\alpha_c$. Surely this is not an asymptotic remedy but a provision of stability for some particular cases.
\end{remark}

\section{Dual simplex method with warm start} \label{sec:warm_start}

In this section we briefly describe how the dual simplex method can be used to solve an LP if the solution of a closely related LP is already available. Consider the LP
\[ \min_x\,\langle c,x \rangle:\quad Ax = b,\ Cx \leq d,\ \langle a,x \rangle \leq \beta.
\]
Here we singled out a scalar inequality with right-hand side $\beta \in \mathbb R$. Suppose that the solution $x^*$ of the LP is available together with an optimal basis $B^*$, i.e., a subset of constraints which are inactive. The remaining active constraints, interpreted as equalities, together with the equality constraints of the LP define $x^*$ uniquely as the solution of a system of linear equations.

We then consider a modified daughter LP
\[ \min_x\,\langle c,x \rangle:\quad Ax = b,\ Cx \leq d,\ \langle a,x \rangle \leq \beta - \delta,
\]
where $\delta > 0$. This means that the separate inequality has been strengthened by the amount of $\delta$.

If the separate inequality has been inactive, i.e., part of the basis $B^*$, then the slack value $\beta - \langle a,x^* \rangle$, which is initially nonnegative, diminishes by $\delta$. In case $\beta - \langle a,x^* \rangle \geq \delta$ the point $x^*$ remains optimal, and the daughter LP possesses the same optimal basis $B^*$. If, on the contrary, $\beta - \langle a,x^* \rangle < \delta$, then the slack becomes negative in the daughter LP and the solution $x^*$ becomes infeasible. The basis $B^*$ remains dual feasible, however, and the daughter LP can be solved by the dual simplex method with the basis $B^*$ as initial iterate.

If the separate inequality has been active, i.e., the inequality is not part of $B^*$, then $B^*$ defines a solution which is different from $x^*$ for the daughter LP. This solution may be feasible or not. In case it is feasible, the new solution defined by $B^*$ is also optimal for the daughter LP. In the opposite case it remains dual feasible and $B^*$ can again be used as a warm start for the dual simplex method.

In our case the daughter LP is obtained from the parent LP by deletion of an edge $e$. This modification can be modeled by setting the corresponding edge capacity $b_e$ to zero. The daughter LP hence differs from the parent LP by one strengthened inequality \eqref{capacity_constraint}, which is exactly the situation described above.

\subsection{Computation of derivatives of the objective function} \label{sec:derivatives_of_LP_objective}

Problems \eqref{eq:opt_robust_throughput}, \eqref{eq:optimization_robust_latency} require the optimization of a function whose value can be computed by solving an LP, where the arguments $\delta b$ of the function additively enter the right-hand side of the constraints of the LP. Here the middle layer is a discrete optimization problem, and interfers only at a set of measure zero in the space of arguments $\delta b$.

We hence need a means to compute the derivative of the value of the LP with respect to its parameters entering the right-hand side. Let us show how these derivatives can be extracted from the optimal simplex tableau.

Consider the LP
\[ \min_x\,\langle c,x \rangle: \quad A_{eq}x = b_{eq},\ Ax \leq b.
\]
We are interested in the computation of the derivatives of the value of the LP with respect to the entry $b_i$ of $b$. When bringing the LP into standard form, the inequalities are turned into equalities $Ax + \beta = b$ by the introduction of nonnegative slacks $\beta$. In particular, the scalar inequality where $b_i$ appears is turned into an equality by the slack $\beta_i$.

Suppose we are in possession of the optimal simplex tableau for the LP. This tableau corresponds to some partition of the variables into basic and non-basic ones. We shall assume that in a neighbourhood of the current value of $b$ the optimal partition does not change. This assumption will hold for almost all values of $b$, since the optimal partition is piece-wise constant as a function of $b$.

The variable $\beta_i$ may be basic or nonbasic in the optimal vertex of the LP. If it is basic, then the inequality involving $b_i$ is not active, and a small enough change in $b_i$ will be absorbed a corresponding change in the slack $\beta_i$. This does not change the value of the cost function, however, since it depends only on the variables $x$. Hence the partial derivative of the value of the LP with respect to $b_i$ vanishes.

If the variable $\beta_i$ is non-basic, then the inequality is active, and a change in $b_i$ effectively moves the optimal vertex. As explained in the previous section, a change in $b_i$ amounts to a redefinition of $\beta_i$ by an additive constant which equals minus the change in $b_i$. Equivalently, we may assume that the value of $\beta_i$ changes in the opposite direction with respect to the change in $b_i$, while all other non-basic variables stay equal to zero. But the value of the LP is an affine function of the non-basic variables (as long as the partition into basic and non-basic variables stays the same), and the coefficients of the dependence are given by the cost row of the optimal simplex tableau. Thus the partial derivative of the value with respect to $b_i$ equals minus the coefficient of the non-basic slack $\beta_i$ in the optimal simplex tableau. 

\subsection{Adding constraints} \label{sec:adding_constraints}

In this section we consider a slightly different situation. Suppose we have solved an LP and obtained the optimal simplex tableau. Now we want to solve another LP, which differs from the previous one by an additional inequality constraint.

The optimal table

\medskip

\begin{tabular}{c|c}
	$-c_0$ & $c$ \\ 
	\hline
	$b$ & $A$
\end{tabular}

\medskip

encodes the LP
\[ \min_{x_N,x_B \geq 0}\,(\langle c,x_N \rangle + c_0): \qquad x_B + Ax_N = b,
\]
where $x_N$ is the subvector of non-basic variables, $x_B$ the vector of basic variables, and $(x_N,x_B) = (0,b)$ the optimal vertex.

Now we want to add a constraint of the form
\[ \langle a_N,x_N \rangle + \langle a_B,x_B \rangle \leq b_0,
\]
where $a_N,a_B$ are coefficient vectors of appropriate length, and $b_0 \in \mathbb R$.

We turn the constraint into an equality by introducing a slack $\beta$ and replace $x_B$ by its expression as a function of the non-basic variables. Then the constraint becomes
\[ \langle a_N - A^Ta_B,x_N \rangle + \beta = b_0 - \langle a_B,b \rangle.
\]
Since the number of equalities increased by one, we have to add a variable to the basic set. Here the new equation come in handy, as it expresses the new slack $\beta$ as an affine function of the already existing non-basic variables. We append $\beta$ to the basic set and obtain the new table

\medskip

\begin{tabular}{c|c}
	$-c_0$ & $c$ \\ 
	\hline
	$b$ & $A$ \\
	$b_0 - \langle a_B,b \rangle$ & $a_N - A^Ta_B$
\end{tabular}

\medskip

Note that the cost row $c$ remains unchanged. Since we assumed the original tableau to be optimal, we have $c \geq 0$ and hence the new tableau is dual feasible. 

If in addition $b_0 - \langle a_B,b \rangle \geq 0$, then also the right-hand side vector of the tableau is nonnegative and the tableau is already optimal. This corresponds to the situation when the new constraint is satisfied by the previous optimal point. If $b_0 - \langle a_B,b \rangle < 0$, then the new tableau is merely dual feasible and can be solved by the dual simplex method.

\section{Network robustification problem} \label{sec:robust}

\subsection{Robust throughput}

In this section we define the robust throughput of the network as the worst-case throughput after $q$ edges are deleted. We obtain the optimization problem
\begin{align*}
	\min_{e_1,\ldots,e_q = 1,\ldots,m}~~ \max_{\lambda, F\geq 0}~~ &\lambda \tag{RT}\label{eq:robust_throughput}\\
	\text{s.t. } &F{\bf 1}\leq b - \sum_{i=1}^q b_{e_i}\xi_{e_i} \\
	&NF = -\lambda L_D
\end{align*}
Note that the robust throughput equals zero if the graph edge connectivity between a pair of edges with non-zero demand is less or equal $q$ (i.e. less or equal $q$ edges can be thrown away and the graph becomes disconnected with the source and target nodes ending up in different connection components).

\subsection{Robust latency}

In the same way we may define robust latency as the optimal value of the problem
\begin{align*}
	\max_{e_1,\ldots,e_q = 1,\ldots,m}~~ \min_{F\geq 0}~~ &\frac{\langle c(F{\bf 1}), {\bf 1} \rangle}{\beta\lambda_{\max}{\bf 1}^\top D{\bf 1}} \tag{RL}\label{eq:robust_latency}\\
	\text{s.t. } &F{\bf 1}\leq b - \sum_{i=1}^q b_{e_i}\xi_{e_i} \\
	&NF = -\beta\lambda_{\max} L_D
\end{align*}

\subsection{Throughput robustification}

We ultimately would like to optimize the network w.r.t. the robustness measure \eqref{eq:robust_throughput}, i.e., to robustify the network against failure of at most $q$ edges. Given a budget $B > 0$, we can spend it on increasing bandwidths or adding new edges. The network optimization problem can be formulated as follows:
\begin{center}
	\textit{Optimize the network within the budget $B$ in such a way that the metrics are as good as possible subject to adversarial deletion of at most $q$ edges.}
\end{center}
In other words, we obtain a multi-level optimization problem.

Optimization of the robust throughput can be formulated as
\begin{align*}
	\max_{\delta b\geq 0}~~\min_{e_1,\ldots,e_q = 1,\ldots,m}~~ \max_{\lambda, F\geq 0}~~ &\lambda \tag{ORT}\label{eq:opt_robust_throughput} \\
	\text{s.t. } &F{\bf 1}\leq (b + \delta b) - \sum_{i=1}^q (b_{e_i} + \delta b_{e_i})\xi_{e_i} \\
	&NF = -\lambda L_D \\
	&{\bf 1}^\top\delta b\leq B
\end{align*}

\subsection{Latency robustification}

In this section we formulate the problem of optimization of the robustness measure \eqref{eq:robust_latency}. Similarly to the throughput case we obtain the problem
\begin{align*}
	\min_{\delta b\geq 0}~~\max_{e_1,\ldots,e_q = 1,\ldots,m}~~ \min_{F\geq 0}~~ &\langle c(F{\bf 1}), {\bf 1} \rangle \tag{ORL}\label{eq:optimization_robust_latency} \\
	\text{s.t. } &F{\bf 1}\leq (b + \delta b) - \sum_{i=1}^q (b_{e_i} + \delta b_{e_i})\xi_{e_i} \\
	&NF = -L_D \\
	&{\bf 1}^\top\delta b\leq B
\end{align*}

\section{Solution of the problems}

\subsection{Throughput optimization}

In this section we consider how to solve the problem of throughput optimization. 

Let us investigate how to solve problem \eqref{eq:throughput} by the simplex method. Let us bring the problem into standard form:
\[ \min_{F \geq 0,\beta \geq 0,\lambda \geq 0}\,(-\lambda):\quad NF = -\lambda L_D,\ F{\bf 1} + \beta = b,
\]
where $\beta \in \mathbb R^m$ is a slack to turn the capacity constraint into an equality. For convenience we defined the variable $\lambda$ as being nonnegative, which does not restrict generality. In this form the LP has $nm + m + 1$ variables and $n^2 + m$ equality constraints.

However, not all equality constraints are independent. If the matrix $N$ has a left kernel vector $\zeta \in \mathbb R^n$, then the Laplacian $L_D$ necessarily must have the same kernel vector, otherwise the problem is infeasible. This yields $n$ linear dependencies between the equality constraints, however, which must first be removed from the LP. Note that there always exists the non-zero kernel vector ${\bf 1} \in \mathbb R^n$, so this degeneracy removal step cannot be avoided.

The redundant constraints are best removed by crossing out some rows from the matrix-valued equality $NF = -\lambda L_D$. Denote the resulting submatrices which are obtained from $N,L_D$ by $\tilde N$, $\tilde L_D$, with number of rows $\tilde n$. Then we obtain the non-degenerate LP
\[ \min_{F \geq 0,\beta \geq 0,\lambda \geq 0}\,(-\lambda):\quad \tilde NF = -\lambda \tilde L_D,\ F{\bf 1} + \beta = b
\]
with $nm + m + 1$ variables and $n\tilde n + m$ equality constraints.

Accordingly, the simplex method will identify $n\tilde n + m$ variables as basic and $(n-\tilde n)m + 1$ as nonbasic on this LP. Let $\eta \subset \{1,\dots,m\}$ be an index set of cardinality $\tilde n$ such that the submatrix $\tilde N_{*\eta}$ is regular, and let $\bar\eta$ be its complement. Then we may define the initial basic and non-basic variable vectors by
\[ B = [ \VecM(F_{\eta*}); \beta ],\quad NB = [ \VecM(F_{\bar\eta*}); \lambda ].
\]
Here the vec operator arranges the entries of the argument matrix column by column, and the different objects in the brackets are stacked one upon another. Let us determine the vertex corresponding to this basic set. By definition the non-basic variables are zero, so the equality constraints become
\[ \tilde NF = \tilde N_{*\eta}F_{\eta*} + \tilde N_{*\bar\eta}F_{\bar\eta*} = \tilde N_{*\eta}F_{\eta*} = 0,\ F{\bf 1} + \beta = b.
\]
By regularity of $\tilde N_{*\eta}$ we obtain $F_{\eta*} = 0$, hence $F = 0$ and $\beta = b \geq 0$. Thus the defined basic set is feasible. Starting with it we may solve the LP by the primal simplex method. In order to build the initial feasible simplex tableau, we need to express the basic variables and the cost function as functions of the non-basic variables. This is accomplished by the formulas
\[ F_{\eta*} =  - \tilde N_{*\eta}^{-1}\tilde N_{*\bar\eta} \cdot F_{\bar\eta*} - \tilde N_{*\eta}^{-1}\tilde L_D \cdot \lambda,\ \beta_{\eta} = \tilde N_{*\eta}^{-1}\tilde N_{*\bar\eta} \cdot F_{\bar\eta*}{\bf 1} + \tilde N_{*\eta}^{-1}\tilde L_D{\bf 1} \cdot \lambda + b_{\eta},
\]
\[ \beta_{\bar\eta} = - F_{\bar\eta*}{\bf 1} + b_{\bar\eta},\ -\lambda = (-1) \cdot \lambda.
\]
If we arrange the variables in the basic set and the non-basic set as above, we arrive at the simplex tableau in the Table.

\medskip

\begin{table}
	\centering
	\begin{tabular}{cc|cc}
		& & $\VecM(F_{\bar\eta*})$ & $\lambda$ \\
		& 0 & 0 & -1 \\
		\hline
		$\VecM(F_{\eta*})$ & 0 & $I \otimes (\tilde N_{*\eta}^{-1}\tilde N_{*\bar\eta})$ & $\VecM(\tilde N_{*\eta}^{-1}\tilde L_D)$ \\
		$\beta_{\eta}$ & $b_{\eta}$ & $-{\bf 1}^T \otimes (\tilde N_{*\eta}^{-1}\tilde N_{*\bar\eta})$ & $-\tilde N_{*\eta}^{-1}\tilde L_D{\bf 1}$ \\
		$\beta_{\bar\eta}$ & $b_{\bar\eta}$ & ${\bf 1}^T \otimes I$ & 0
	\end{tabular}
	\caption{Initial simplex tableau for solving the main LP for throughput computation.}
	\label{initial_tableau_throughput}
\end{table}

\medskip

Here the first row and column indicate the order of the basic and non-basic variables, i.e., which rows and columns of the tableau are associated to which variables, the second row and column are the cost row and the right-hand side column, respectively, with minus the current cost value in the corner, and the rest are the coefficients of the linear dependence of the basic variables as functions of the non-basic ones.

Let us summarize the steps more compactly:
\begin{itemize}
	\item determine a maximal index set $R \subset \{1,\dots,n\}$ such that the corresponding rows of $N$ are linearly independent
	\item if the $n - |R|$ linear independent left kernel vectors of $N$ are not kernel vectors of $L_D$, then the LP is infeasible
	\item construct the submatrix $\tilde N = N_{R*}$ of the independent rows
	\item determine a maximal index set $\eta \subset \{1,\dots,m\}$ such that the corresponding columns of $\tilde N$ are linearly independent
	\item build the initial simplex tableau according to Table \ref{initial_tableau_throughput}
	\item solve the LP by the primal simplex method
\end{itemize}

\subsection{Robust throughput computation}

We now pass to the robust throughput, computed in \eqref{eq:robust_throughput}. Note that since in the inner problem we changed the objective by changing its sign and passing to a minimization problem, we have to pass to maximization in the outer problem.
For each set $\{e_1,\dots,e_q\}$ of edges to be removed, the corresponding inner LP differs from the main LP \eqref{eq:throughput} in $q$ entries of the right-hand side of the constraint $F \cdot {\bf 1} \leq b$. Setting an element $b_i$ of $b$ to zero amounts to subtracting the amount $b_i$ from the corresponding slack $\beta_i$. This can be accomplished by solving the LP with the dual simplex method, as described in Section \ref{sec:warm_start}. Here we can remove the edges one by one, each time solving an LP which differs in one entry of the right-hand side from the previous one. If $q$ or $m$ are not too large, we may compute the robust throughput by an exhaustive search on the combinations $\{e_1,\dots,e_q\}$.

\medskip

Finally we discuss problem \eqref{eq:opt_robust_throughput}. Here the outer optimization problem seeks to maximize the robust throughput by increasing the right-hand side vector $b$ within an allotted budget $B$. In our standard form formulation, the outer optimization problem becomes a minimization problem.

The value of modified problem \eqref{eq:robust_throughput} is minimized with respect to the right-hand side vector $b$, which varies over a shifted simplex
\[ \Delta = b^0 + \{ \delta b \geq 0 \mid \langle \delta b,{\bf 1} \rangle \leq B \},
\]
where by $b^0$ we denoted the initial capacity vector.

The value of a LP, in particular problem \eqref{eq:throughput}, is a convex function of the right-hand side $b$. This does not change if we set some entries $\{ b_i \mid i \in I \}$ of this vector in the LP to zero. This makes the value independent of these particular arguments $b_i$, $i \in I$, but the dependency on the other arguments $b_j$, $j \not\in I$ is still convex. Hence the overall dependence on $b$ remains convex. If we thereafter take the maximum of the value over a finite set of problems (the middle optimization problem in \eqref{eq:opt_robust_throughput}), it still remains convex as a minimum of convex functions. Therefore the outer problem minimizes a convex function over a convex set and is hence a convex problem.

We suggest to solve this problem by a gradient descent method. In order to do this, we need to compute the gradient of the inner optimization problem with respect to the argument $b$. Since the middle layer of problem \eqref{eq:opt_robust_throughput} is a discrete optimization problem, it does have an effect on the gradient only in that it determines a subset of $q$ entries of the capacity vector $b$ which are set to zero in the innermost LP. The derivative of the objective value with respect to the capacity vector can then be computed from the optimal simplex tableau obtained from the innermost optimization layer, as explained in Section \ref{sec:derivatives_of_LP_objective}.

\medskip

Alternatively, taking into account that the objective of the outer optimization problem is piece-wise affine, we may add a linear constraint each time the value and gradient is evaluated at a point. Let us describe this method in detail.

Let $f(x)$ be a piece-wise affine convex function. Suppose we want to minimize $f(x)$ over a polytope $P = \{ x \mid Ax \leq b \}$. Then we may proceed as follows.

\begin{algorithm}[H]
	\caption{Minimization of piece-wise affine function}
	\begin{algorithmic}[1]
		\Require Start point $x_0 \in P$. Constraint set $= \emptyset$
		\Ensure Optimal $x^{\star} \in P$,
		\For{$t \in \{0, \dotsc, \infty\}$}
		\State compute $f_t = f(x_t)$, $g_t = \nabla f(x_t)$
		\State add constraint $\phi \geq f_t + \langle g_t,x-x_t \rangle$ to constraint set
		\State minimize $\phi$ over $(\phi,x) \in \mathbb R \times P$ subject to constraints
		\State define $x_{t+1}$ as the minimizer
		\If{$x_{t+1} = x_t$}
		\State Break
		\EndIf
		\EndFor
		\State $x^* \gets x_t$
	\end{algorithmic}
\end{algorithm}

Each minimization problem is an LP, and the next LP differs from the previous one by an additional constraint. Hence the next LP can be solved by the dual simplex method when warm-started at the optimal basis of the previous LP, as described in Section \ref{sec:adding_constraints}. Note that the very first LP can be solved analytically, since we just minimize an affine function over a simplex.

\subsection{Robust latency optimization}

Latency optimization is more complicated and will be considered at a later stage. In this paper we only include numerical simulations.

\section{Numerical simulations}

\begin{figure}[h!]
    \centering
    \includegraphics[width=0.48\linewidth]{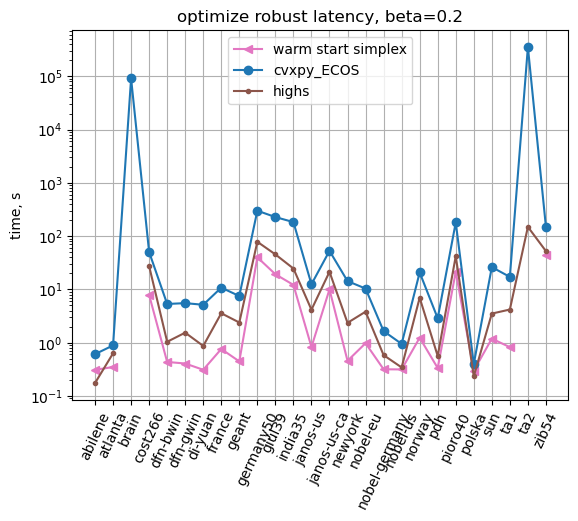}
    \includegraphics[width=0.48\linewidth]{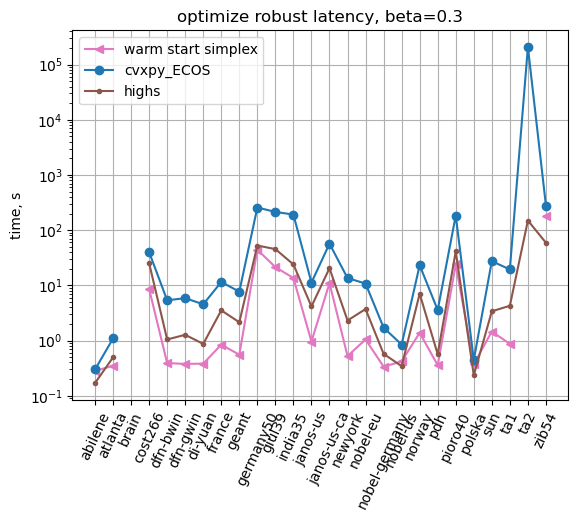} \\
    \includegraphics[width=0.48\linewidth]{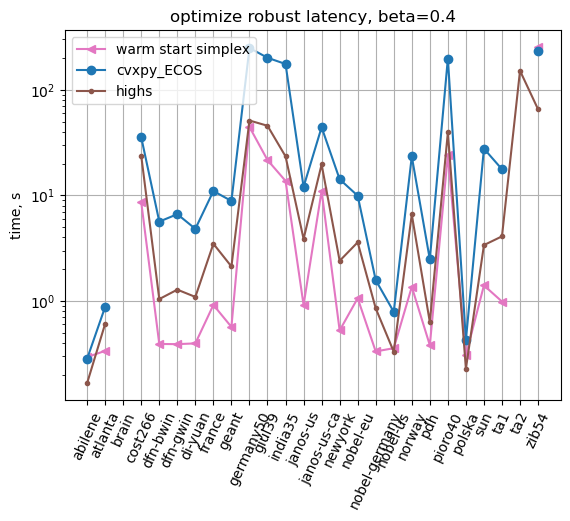}
    \includegraphics[width=0.48\linewidth]{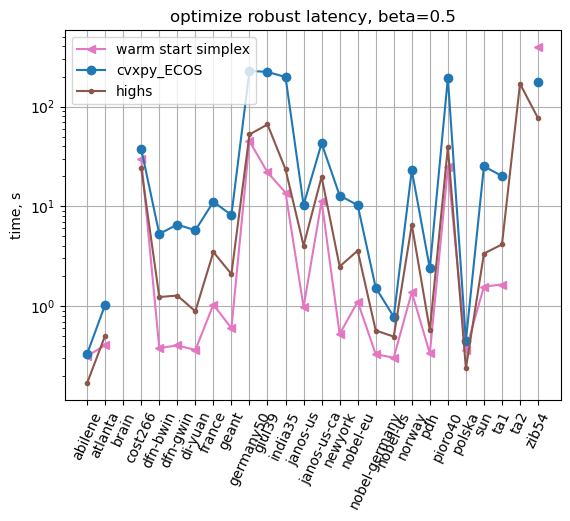} \\
    \includegraphics[width=0.48\linewidth]{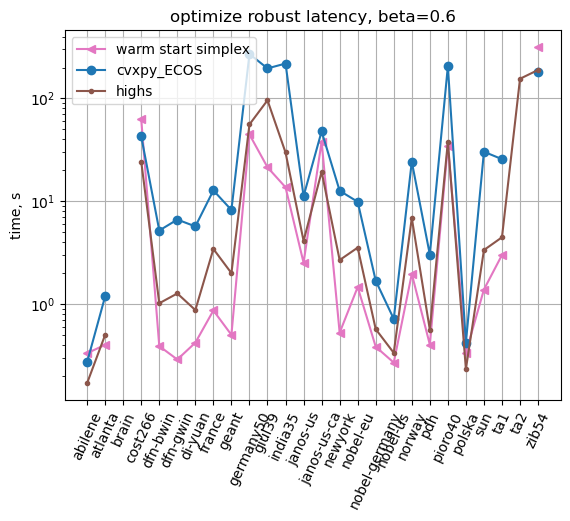}
    \includegraphics[width=0.48\linewidth]{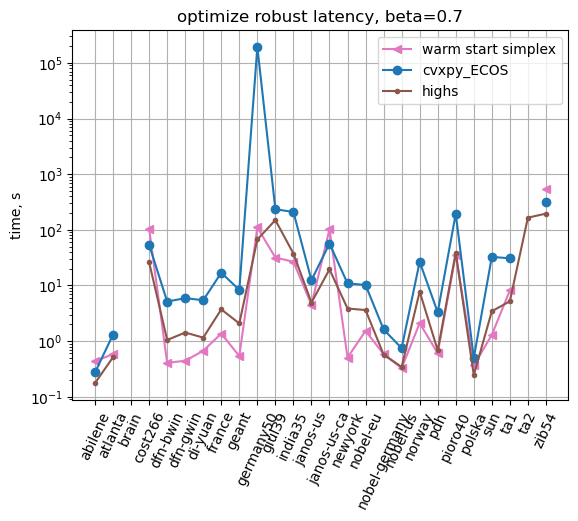}
    \caption{Optimization of robust latency}
    \label{fig:optimize_robust_latency}
\end{figure}

In this paper, two approaches to solution of robust network flow problem were studied: formulating a larger linear program via introducing new variables and solving a multi-layer optimization problem with linear programming at the lower level. In the latter case, a warm started simplex method was proposed. We compare the latter approach ("warm start simplex" in the figures) against solving the problem with additional variables. Problem solution methods include simplex method ("highs") and interior point algorithm ("cvxpy\_ECOS") both implemented in CVXPY library \cite{diamond2016cvxpy}. The comparison is done on telecom networks from SNDLib connection \cite{orlowski2010sndlib}.

\begin{figure}[H]
    \centering
    \includegraphics[width=0.8\linewidth]{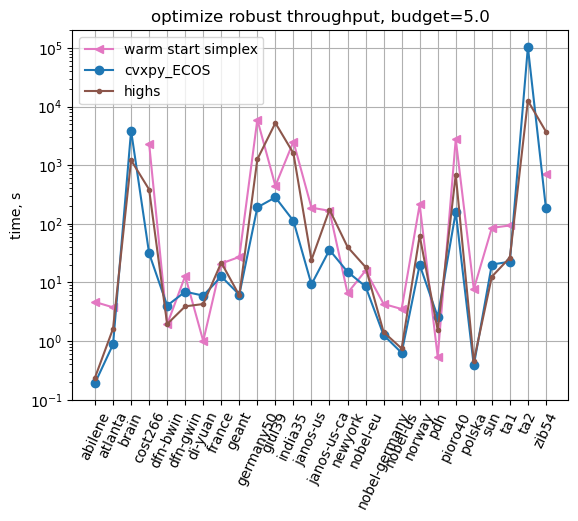}
    \caption{Optimization of robust throughput}
    \label{fig:enter-label}
\end{figure}

The plots show that warm started simplex method is competitive to its counterparts in all scenarios and even outperforms others almost by an order in several cases.

\section{Conclusions} \label{sec:conclusions}

In this work we considered the robust multi-commodity flow problem through a network described by a graph. The robustness is understood against failure, or deletion, of edges. Robustification of the network is accomplished by allocating a fixed budget of additional capacities to the edges of the network, in order to minimize the maximal performance degradation which can result from the failure of a constrained number of edges.

We considered two methods to solve the optimal robustification problem. The first relied on explicitly enumerating the values of the discrete variable in the problem formulation. It results in a large linear program. The second method leans on branch-and-bound methods known from integer linear programming and uses the dual simplex algorithm with warm start from the parent node.

Numerical experiments show that the second approach is more efficient. It can be potentially applied to other optimal network robustification problems, if the number of failed nodes remains small.

\bibliographystyle{plain}
\bibliography{robustThroughputPreprint}

\end{document}